\newcommand{\norm}[1]{\left\| #1 \right\|}  
\newcommand{\scprd}[1]{\left\langle #1 \right\rangle}  
\renewcommand{\d}{\,\mathrm{d}} 
\newcommand{\e}{\mathrm{e}} 
\newcommand{\N}{\mathbb{N}}  
\newcommand{\R}{\mathbb{R}}
\newcommand{\eps}{\varepsilon}
\renewcommand{\phi}{\varphi}
\newcommand{\ul}{\underline}
\newcommand{\ol}{\overline}
\numberwithin{equation}{section}
\newtheorem{thm}{Theorem}[section]
\newtheorem{prop}[thm]{Proposition}
\newtheorem{lm}[thm]{Lemma}
\theoremstyle{definition} \newtheorem{ex}[thm]{Example}
\theoremstyle{definition} \newtheorem{df}[thm]{Definition}
\theoremstyle{definition}
\title{Weak input-to-state stability: characterizations and counterexamples}
\author{Jochen Schmid \\  
\small Institut f\"ur Mathematik, Universit\"at W\"urzburg, 97074 W\"urzburg, Germany\\
\small jochen.schmid@mathematik.uni-wuerzburg.de}    
\date{}
\begin{document}

\maketitle

\begin{abstract}
\small{ \noindent 
We establish characterizations of weak input-to-state stability for abstract dynamical systems with inputs, 
which are similar to characterizations of uniform and of strong input-to-state stability established in a recent paper by A. Mironchenko and F. Wirth. We also answer, 
by means of suitable counterexamples, two 
open questions concerning weak input-to-state stability (and its relation to other common stability concepts) raised in the aforementioned paper. 
}
\end{abstract}

{ \small \noindent 
Index terms:  Input-to-state stability (weak, strong, uniform), 
infinite-dimensional dynamical systems with inputs
}

\section{Introduction}

In this paper, we study the property of weak input-to-state stability of general dynamical systems $\mathfrak{S} = (X,\mathcal{U},\phi)$ with inputs. Such a system is determined by its generally infinite-dimensional state space $X$, its set $\mathcal{U}$ of admissible input functions, and its dynamical map $$\phi: [0,\infty) \times X \times \mathcal{U} \to X$$ which for given initial state $x_0 \in X$ and input $u \in \mathcal{U}$ yields the state $\phi(t,x_0,u)$ of the system at any time $t \in [0,\infty)$. 
Weak input-to-state stability of such a system means, roughly speaking, that $0$ is an asymptotically stable -- that is, stable and attractive -- equilibrium point of the (undisturbed) system with input $u = 0$ and that this asymptotic stability property is affected only slightly by small (disturbance) inputs $u \ne 0$. In precise terms, 
this means that there are continuous monotonically increasing functions $\ul{\sigma}, \ul{\gamma}, \ol{\gamma}: [0,\infty) \to [0,\infty)$ with $\ul{\sigma}(0), \ul{\gamma}(0), \ol{\gamma}(0) = 0$ such that for all $(x_0,u) \in X \times \mathcal{U}$ the following estimates hold true: 
\begin{gather}
\norm{ \phi(t,x_0,u) } \le \ul{\sigma}(\norm{x_0}) + \ul{\gamma}(\norm{u}_{\mathcal{U}}) 
\qquad (t \in [0,\infty)) \label{eq:UGS, intro}\\
\text{and} \notag \\
\limsup_{t\to\infty} \norm{ \phi(t,x_0,u) } \le \ol{\gamma}(\norm{u}_{\mathcal{U}}) 
\label{eq:wAG, intro}
\end{gather}
meaning that 
the stability property and the attractivity property of $0$, respectively, are affected only slightly by disturbance inputs $u \in \mathcal{U}$ of small magnitude $\norm{u}_{\mathcal{U}}$. Inequality~\eqref{eq:UGS, intro} is commonly referred to as the uniform global stability and~\eqref{eq:wAG, intro} is referred to as the weak asymptotic gain property of the system. 
It should be noted that if the limit relation~\eqref{eq:wAG, intro} holds uniformly w.r.t.~$u \in \mathcal{U}$ or, respectively, locally uniformly w.r.t.~$x_0 \in X$ and uniformly w.r.t.~$u \in \mathcal{U}$, then the system is even strongly or uniformly input-to-state stable, respectively. 
\smallskip

In recent years, these last two notions of strong and especially of uniform input-to-state stability have been intensively studied. See, for instance,~\cite{DaMi13}, \cite{Mi16}, \cite{MiIt16}, \cite{MiWi16a}, \cite{MiWi17}, \cite{JaNaPaSc16}, \cite{JaScZw17}, \cite{NaSc17}, \cite{MaPr11}, \cite{MiKaKr17}, \cite{TaPrTa17}, \cite{KaKr16}, \cite{KaKr17}, \cite{ZhZh17a}, \cite{ZhZh17b} and the references therein. Also, weak input-to-state stability can be established for a rather large class of semilinear systems (both in the case of inputs entering in the domain and in the case of inputs entering at the boundary of the domain on which the partial differential equation describing the system lives). See~\cite{ScZw18-MTNS}, \cite{ScZw18}, \cite{Sc18-wp}. It is therefore natural to study the property of weak input-to-state stability 
-- and especially its relation to other common stability properties -- 
from a general point of view. 
\smallskip

In the present paper, we establish a characterization of weak input-to-state stability similar to the 
characterizations of strong and uniform input-to-state stability from~\cite{MiWi16a} and, moreover, we investigate the relation of weak input-to-state stability to other common and natural stability concepts, namely strong input-to-state stability and zero-input uniform global stability. In particular, we answer 
two open questions from~\cite{MiWi16a}. 
\smallskip

In more detail, the contents of the present paper can be described as follows. 
Section~\ref{sect:2} provides the necessary preliminaries 
setting out and recalling the precise definitions of abstract dynamical systems with inputs and of the various stability notions employed later on.  
In Section~\ref{sect:3} we establish a characterization of weak input-to-state stability which is parallel to the characterizations of strong and uniform input-to-state stability for infinite-dimensional systems recently established in~\cite{MiWi16a}. 
In Section~\ref{sect:4} we investigate the relation of weak input-to-state stability to strong input-to-state stability. 
We show, by means of a suitable counterexample, that weak input-to-state stability is strictly weaker than strong input-to-state stability, 
thereby answering an open question raised in~\cite{MiWi16a}.  In our example, we work with modulated-linear systems, which are  described by  evolution equations of the form
\begin{align} \label{eq:modul-lin, def}
x' = \alpha(u(t)) A x 
\end{align}
with a linear operator $A$ and a modulating prefactor $\alpha(u(t))$, and the input space $\mathcal{U}$ is a certain subset of $L^p([0,\infty),\R)$. 
We also show that in the special case of linear systems, 
weak input-to-state stability is equivalent to strong input-to-state stability. 
In the case of semilinear systems, the relation of weak and strong input-to-state stability remains open. 
We show at least, however, that for semilinear systems weak input-to-state stability is strictly weaker than uniform input-to-state stability. 
In Section~\ref{sect:5} we investigate the relation of weak input-to-state stability to the combination of zero-input uniform global stability and the weak asymptotic gain property. 
We show, by means of a suitable counterexample, that weak input-to-state stability is strictly stronger than the aforementioned combination of properties, thereby answering an open question raised in~\cite{MiWi16a}. In our example, we work with linear systems with input space $\mathcal{U}$ being a certain subset of $L^{\infty}([0,\infty),\R)$. We also show that for linear systems with input space $\mathcal{U}$ being a full $L^p$-space, weak input-to-state stability is equivalent to the aforementioned combination of properties. 
\smallskip

In the entire paper, $\R^+_0 := [0,\infty)$ denotes the non-negative reals and $\ol{B}_r^{Z}(0) := \{ z \in Z: \norm{z} \le r\}$ for any subset $Z$ of a normed linear space with norm $\norm{\cdot}$. As usual, $\mathcal{K}$ and  $\mathcal{L}$ denote the following classes of comparison functions:
\begin{gather*}
\mathcal{K} := \{ \gamma \in C(\R^+_0,\R^+_0): \gamma \text{ strictly increasing with } \gamma(0) = 0 \} \\
\mathcal{L} := \big\{ \gamma \in C(\R^+_0,\R^+_0): \gamma \text{ strictly decreasing with } \lim_{r\to\infty} \gamma(r) = 0 \big\}.
\end{gather*}
Also, $\norm{\cdot}_p$ for any $p \in [1,\infty) \cup \{\infty\}$  stands for the standard norm on $L^p(\R^+_0,U)$, where $U$ is any Banach space, and $u_1\,\&_{\tau}u_2$ stands for the concatenation of the functions $u_1, u_2: \R^+_0 \to U$ at time $\tau \in \R^+_0$ defined by
\begin{align*}
(u_1\,\&_{\tau} u_2)(t) := 
\begin{cases} u_1(t) \qquad (t \in [0,\tau)) \\
u_2(t-\tau) \qquad (t \in [\tau,\infty))
\end{cases}.
\end{align*} 
And finally, in the context of admissible control operators -- and, in particular, of extrapolation of semigroup generators -- we adopt the standard notation from~\cite{EnNa}, \cite{TuWe}.

\section{Setting and definitions} \label{sect:2}

\subsection{Systems with inputs} \label{sect:contr syst}

We begin by setting out the class of systems that we -- just like~\cite{MiWi16a} -- are going to deal with in this paper. 

\begin{df}
A (forward-complete) dynamical system $\mathfrak{S} = (X,\mathcal{U},\phi)$ with inputs is determined by 
\begin{itemize}
\item a normed linear space $X$ (the state space of $\mathfrak{S}$) endowed with a norm $\norm{\cdot}_X$ 
\item a non-empty set $\mathcal{U} \subset 
\{ \text{functions } u: \R^+_0 \to U \}$ (the set of admissible inputs of $\mathfrak{S}$) endowed with a norm $\norm{\cdot}_{\mathcal{U}}$
\item a map $\phi: \R^+_0 \times X \times \mathcal{U} \to X$ (the dynamical map of $\mathfrak{S}$)
\end{itemize}
such that the following properties are satisfied:
\begin{itemize}
\item[(i)] $\mathcal{U}$ is invariant under shifts to the left, that is, $u(\cdot+\tau) \in \mathcal{U}$ and $\norm{u(\cdot+\tau)}_{\mathcal{U}} \le \norm{u}_{\mathcal{U}}$ for every $u \in \mathcal{U}$ and $\tau \in \R^+_0$ 
\item[(ii)] $\mathcal{U}$ is invariant under concatenations, that is, $u_1 \, \&_{\tau} \, u_2 \in \mathcal{U}$ for every $u_1,u_2 \in \mathcal{U}$ and $\tau \in \R^+_0$ 
\item[(iii)] $\phi(0,x_0,u) = x_0$ for every $(x_0,u) \in X \times \mathcal{U}$ and, moreover, $\phi$ is cocyclic, that is, 
\begin{align}
\phi(t+s,x_0,u) = \phi(t, \phi(s,x_0,u), u(\cdot+s))
\end{align}
for every $(x_0,u) \in X \times \mathcal{U}$ and $s,t \in \R^+_0$
\item[(iv)] $\phi(\cdot,x_0,u): \R^+_0 \to X$ is continuous for every $(x_0,u) \in X \times \mathcal{U}$
\item[(v)] $\phi$ is causal, that is, 
\begin{align}
\phi(\cdot,x_0,u_1)|_{[0,\tau]} = \phi(\cdot,x_0,u_2)|_{[0,\tau]}
\end{align}
for every $x_0 \in X$, $u_1, u_2 \in \mathcal{U}$ and $\tau \in \R^+_0$ with $u_1|_{[0,\tau]} = u_2|_{[0,\tau]}$.
\end{itemize}
\end{df}

In the following, we will always write $\norm{\cdot} := \norm{\cdot}_X$ for brevity. Since $\mathcal{U}$ is not assumed to be a linear space, it is sligthly abusive to speak of a norm $\norm{\cdot}_{\mathcal{U}}$ on $\mathcal{U}$. What we mean is, of course, that $\norm{\cdot}_{\mathcal{U}}$ 
is the restriction of a norm of some linear space $\mathcal{F} \supset \mathcal{U}$.  

\subsection{Stability and attractivity concepts} 

We continue by recalling the stability and attractivity concepts from~\cite{MiWi16a} that will be used in the sequel.

\begin{df}
A dynamical system $\mathfrak{S} = (X,\mathcal{U},\phi)$ with inputs is called
\begin{itemize}
\item[(i)] \emph{uniformly globally stable} iff there exist $\ul{\sigma}, \ul{\gamma} \in \mathcal{K}$ such that for all $(x_0,u) \in X \times \mathcal{U}$
\begin{align} \label{eq:UGS-def}
\norm{\phi(t,x_0,u)} \le \ul{\sigma}(\norm{x_0}) + \ul{\gamma}(\norm{u}_{\mathcal{U}}) \qquad (t \ge 0)
\end{align}
\item[(ii)] \emph{uniformly locally stable} iff there exist $\ul{\sigma}, \ul{\gamma} \in \mathcal{K}$ and $r > 0$ such that~\eqref{eq:UGS-def} holds true for all $(x_0,u) \in \ol{B}_r^{X}(0) \times \ol{B}_r^{\mathcal{U}}(0)$
\item[(iii)] \emph{zero-input uniformly globally stable} or \emph{zero-input uniformly locally stable}, respectively, iff $0 \in \mathcal{U}$ and the restricted system $\mathfrak{S}_0 := (X,\mathcal{U}_0,\phi)$ with $\mathcal{U}_0 := \{0\}$ is uniformly globally or uniformly locally stable, respectively.
\end{itemize}
\end{df}

\begin{df}
Suppose $\mathfrak{S} = (X,\mathcal{U},\phi)$ is a dynamical system with inputs and $\ol{\gamma} \in \mathcal{K} \cup \{0\}$. $\mathfrak{S}$ is said to be 
\begin{itemize}
\item[(i)] of \emph{weak asymptotic gain} $\ol{\gamma}$ iff for every $\eps > 0$ and $(x_0,u) \in X \times \mathcal{U}$ there exists a time $\ol{\tau}(\eps,x_0,u) \in \R^+_0$ such that
\begin{align}
\norm{\phi(t,x_0,u)} \le \eps + \ol{\gamma}(\norm{u}_{\mathcal{U}}) \qquad (t \ge \ol{\tau}(\eps,x_0,u))
\end{align}
\item[(ii)] of \emph{strong asymptotic gain} $\ol{\gamma}$ iff for every $\eps > 0$ and $x_0 \in X$ there exists a time $\ol{\tau}(\eps,x_0) \in \R^+_0$ such that
\begin{align}
\norm{\phi(t,x_0,u)} \le \eps + \ol{\gamma}(\norm{u}_{\mathcal{U}}) \qquad (t \ge \ol{\tau}(\eps,x_0) \text{ and } u \in \mathcal{U})
\end{align}
\item[(iii)] of \emph{uniform asymptotic gain} $\ol{\gamma}$ iff for every $\eps > 0$ and $r > 0$ there exists a time $\ol{\tau}(\eps,r) \in \R^+_0$ such that
\begin{align}
\norm{\phi(t,x_0,u)} \le \eps + \ol{\gamma}(\norm{u}_{\mathcal{U}}) \qquad (t \ge \ol{\tau}(\eps,r) \text{ and } (x_0,u) \in \ol{B}_r^{X}(0) \times \mathcal{U}).
\end{align}
\end{itemize}
Also, $\mathfrak{S}$ is said to be of \emph{weak asymptotic gain} iff it is of weak asymptotic gain $\ol{\gamma}$ for some $\ol{\gamma} \in \mathcal{K} \cup \{0\}$.
\end{df}

\begin{df}
Suppose $\mathfrak{S} = (X,\mathcal{U},\phi)$ is a dynamical system with inputs. $\mathfrak{S}$ is said to have the \emph{weak limit property} iff there is a $\ol{\gamma} \in \mathcal{K}$ such that for every $\eps > 0$ and $(x_0,u) \in X \times \mathcal{U}$ there exists a time $\ol{\tau}(\eps,x_0,u)$ such that
\begin{align}
\inf_{t \in [0, \ol{\tau}(\eps,x_0,u)] } \norm{\phi(t,x_0,u)} \le \eps + \ol{\gamma}(\norm{u}_{\mathcal{U}}).
\end{align}
\end{df}

In~\cite{MiWi16a}, the weak asymptotic gain and the weak limit properties are referred to simply as asymptotic gain and limit property, respectively. We deviate from that terminology in order to emphasize the logical relation to the strong and uniform variants and in order to emphasize the parallelism of certain issues. 

\begin{lm}
Suppose that $\mathfrak{S} = (X,\mathcal{U},\phi)$ is a dynamical system with inputs and that $\mathcal{U} \subset L^p(\R^+_0,U)$ with $\norm{\cdot}_{\mathcal{U}} := \norm{\cdot}_p$ for some $p \in [1,\infty)$ or that $\mathcal{U} \subset L^{\infty}_0(\R^+_0,U)$ with $\norm{\cdot}_{\mathcal{U}} := \norm{\cdot}_{\infty}$, where $U$ is a Banach space and 
\begin{align*}
L^{\infty}_0(\R^+_0,U) := \big\{ u \in L^{\infty}(\R^+_0,U): \norm{u(\cdot+t)}_{\infty} \longrightarrow 0 \text{ as } t \to \infty \big\}.
\end{align*}
If $\mathfrak{S}$ is of weak asymptotic gain, then it is automatically of weak asymptotic gain $0$. 
\end{lm}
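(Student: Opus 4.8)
The plan is to exploit two ingredients: the cocycle property (iii), which lets one \emph{restart} the system at any time $s$ with the shifted input $u(\cdot+s)$, and the fact that for both admissible types of input space the shifted input decays in norm, i.e.\ $\norm{u(\cdot+s)}_{\mathcal{U}} \to 0$ as $s \to \infty$. Combining these, one can render the gain term $\ol{\gamma}(\norm{\cdot}_{\mathcal{U}})$ arbitrarily small by simply waiting long enough before invoking the hypothesised weak asymptotic gain estimate, and thereby push the effective gain down to $0$.

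First I would dispose of the trivial case $\ol{\gamma} = 0$ and assume $\ol{\gamma} \in \mathcal{K}$. Then I would establish the decay of shifted inputs: in the $L^p$ case with $p < \infty$ one has $\norm{u(\cdot+s)}_p^p = \int_s^\infty \norm{u(r)}^p \d r \to 0$ as $s \to \infty$ by finiteness of $\norm{u}_p$, while in the $L^{\infty}_0$ case the decay $\norm{u(\cdot+s)}_{\infty} \to 0$ is precisely the defining property of $L^{\infty}_0(\R^+_0,U)$. Since $\ol{\gamma}$ is continuous with $\ol{\gamma}(0) = 0$, it follows that for any fixed $(x_0,u) \in X \times \mathcal{U}$ and any $\eps > 0$ I can choose $s = s(\eps,u) \ge 0$ so large that $\ol{\gamma}(\norm{u(\cdot+s)}_{\mathcal{U}}) \le \eps/2$.

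Next, setting $y := \phi(s,x_0,u) \in X$ and $v := u(\cdot+s) \in \mathcal{U}$ (admissible by the left-shift invariance (i)), I would apply the weak asymptotic gain $\ol{\gamma}$ property to the pair $(y,v)$ at level $\eps/2$: there is a time $\ol{\tau}(\eps/2,y,v)$ beyond which $\norm{\phi(t,y,v)} \le \eps/2 + \ol{\gamma}(\norm{v}_{\mathcal{U}}) \le \eps$. The cocycle identity $\phi(t,y,v) = \phi(t+s,x_0,u)$ then rewrites this as $\norm{\phi(t',x_0,u)} \le \eps$ for all $t' \ge s + \ol{\tau}(\eps/2,y,v)$, which is exactly the weak asymptotic gain $0$ estimate with $\ol{\tau}_0(\eps,x_0,u) := s + \ol{\tau}(\eps/2,y,v)$.

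I do not expect any serious obstacle here, since the argument is essentially self-contained. The only point genuinely requiring care is the verification that the shifted inputs decay in the relevant norm: this is where the two specific structural hypotheses on $\mathcal{U}$ (as opposed to an arbitrary shift-invariant input space, where the conclusion would fail) are actually used. Everything else is a routine combination of the continuity of $\ol{\gamma}$ at $0$ with the cocycle property.
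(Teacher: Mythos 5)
Your argument is correct and is essentially the paper's own proof: both restart the system at a time $t_0$ chosen so that $\ol{\gamma}(\norm{u(\cdot+t_0)}_{\mathcal{U}})$ is small, then apply the assumed weak asymptotic gain to $(\phi(t_0,x_0,u),\,u(\cdot+t_0))$ and use the cocycle property. The only differences are cosmetic ($\eps/2$ versus an ending bound of $2\eps$, and your explicit verification of the shift-decay, which the paper takes for granted).
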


\begin{proof}
Suppose $\mathfrak{S}$ is of weak asymptotic gain $\ol{\gamma} \in \mathcal{K}$ (with corresponding times $\ol{\tau}(\eps,x_0,u)$) and let $\eps >0$ and $(x_0,u) \in X \times \mathcal{U}$ be fixed. Since $\norm{u(\cdot+t)}_{\mathcal{U}} \longrightarrow 0$ as $t \to \infty$ by our assumptions on $\mathcal{U}$, we can choose a time $t_0 \in \R^+_0$ such that
\begin{align} \label{eq:lm-wAG-implies-wAG-0, 1}
\ol{\gamma}\big( \norm{u(\cdot+t_0)}_{\mathcal{U}} \big) \le \eps. 
\end{align}
Since $\mathfrak{S}$ is of weak asymptotic gain $\ol{\gamma}$, we have for all $s \ge \ol{\tau}(\eps, \phi(t_0,x_0,u), u(\cdot+t_0))$ that
\begin{align} \label{eq:lm-wAG-implies-wAG-0, 2}
\norm{\phi(s+t_0,x_0,u)} = \norm{ \phi(s, \phi(t_0,x_0,u), u(\cdot+t_0)) } \le \eps + \ol{\gamma}\big( \norm{u(\cdot+t_0)}_{\mathcal{U}} \big).
\end{align}
Combining now~\eqref{eq:lm-wAG-implies-wAG-0, 1} and~\eqref{eq:lm-wAG-implies-wAG-0, 2} we see that for all $$t \ge \tau_0(\eps,x_0,u) := t_0 + \ol{\tau}(\eps, \phi(t_0,x_0,u), u(\cdot+t_0))$$ one has
$\norm{\phi(t,x_0,u)} \le 2 \eps$. Consequently, $\mathfrak{S}$ is of weak asymptotic gain $0$, as desired.
\end{proof}

\subsection{Input-to-state stability concepts} 

With the stability and attractivity properties recalled above, we can now define the central concepts of this paper, namely weak, strong, and uniform input-to-state stability. 

\begin{df}
A dynamical system $\mathfrak{S} = (X,\mathcal{U},\phi)$ with inputs is called 
\emph{weakly input-to-state stable}, or \emph{strongly input-to-state stable}, or \emph{uniformly input-to-state stable}, 
respectively, iff it is uniformly globally stable and of weak, or strong, or uniform asymptotic gain, respectively. 
\end{df}

Instead of uniform input-to-state stability one often simply speaks of input-to-state stability in the literature.

\section{Characterization of weak input-to-state stability} \label{sect:3}

We begin with a characterization of weak input-to-state stability which is parallel to the recently established characterizations of strong and uniform input-to-state stability from~\cite{MiWi16a}. It should be pointed out that the equivalence of items~(i) and (ii) below is already stated in~\cite{MiWi16a} (Remark~5), yet without proof.

\begin{thm}
Suppose $\mathfrak{S} = (X,\mathcal{U},\phi)$ is a dynamical system with inputs. Then each of the following items is equivalent to $\mathfrak{S}$ being weakly input-to-state stable.
\begin{itemize}
\item[(i)] $\mathfrak{S}$ is uniformly globally stable and has the weak limit property
\item[(ii)] $\mathfrak{S}$ is uniformly globally stable and has the weak asymptotic gain property
\item[(iii)] there exist $\sigma, \gamma \in \mathcal{K}$ and $\beta: X \times \mathcal{U}\times \R^+_0 \to \R^+_0$ with $\beta(x_0,u,\cdot) \in \mathcal{L}$ for $x_0 \ne 0$ such that for all $(x_0,u) \in X \times \mathcal{U}$ one has:
\begin{gather}
\norm{\phi(t,x_0,u)} \le \beta(x_0,u,t) + \gamma(\norm{u}_{\mathcal{U}}) \qquad (t \in \R^+_0) \label{eq:wISS-beta}\\
\text{and} \notag \\
\beta(x_0,u,t) \le \sigma(\norm{x_0})  \qquad (t \in \R^+_0). \label{eq:wISS-bound on beta} 
\end{gather}
\end{itemize}
\end{thm}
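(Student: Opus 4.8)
The plan is to use item~(ii) as the reference condition. Since being \emph{of weak asymptotic gain} means precisely having the \emph{weak asymptotic gain property}, item~(ii) is just the definition of weak input-to-state stability, so it suffices to prove the cycle (ii)~$\Rightarrow$~(i)~$\Rightarrow$~(ii) together with the equivalence (ii)~$\Leftrightarrow$~(iii). The two easy implications I would dispatch first. For (ii)~$\Rightarrow$~(i): if $\mathfrak{S}$ is of weak asymptotic gain $\ol{\gamma}$ with times $\ol{\tau}(\eps,x_0,u)$, then evaluating the gain estimate at $t = \ol{\tau}(\eps,x_0,u)$ gives $\inf_{t\in[0,\ol{\tau}(\eps,x_0,u)]}\norm{\phi(t,x_0,u)} \le \norm{\phi(\ol{\tau}(\eps,x_0,u),x_0,u)} \le \eps + \ol{\gamma}(\norm{u}_{\mathcal{U}})$, i.e.\ the weak limit property with the same $\ol{\gamma}$. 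For (iii)~$\Rightarrow$~(ii): combining \eqref{eq:wISS-beta} with \eqref{eq:wISS-bound on beta} shows \eqref{eq:UGS-def} holds with $\sigma,\gamma$ in the roles of $\ul{\sigma},\ul{\gamma}$, while $\beta(x_0,u,\cdot)\in\mathcal{L}$ for $x_0\ne 0$ (and $\beta(0,u,\cdot)\equiv 0$, forced by \eqref{eq:wISS-bound on beta}) yields $\beta(x_0,u,t)\to 0$; so for each $\eps$ one finds $\ol{\tau}$ with $\beta(x_0,u,t)\le\eps$ for $t\ge\ol{\tau}$, and \eqref{eq:wISS-beta} gives the weak asymptotic gain property with gain $\gamma$.

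The heart of the argument is (i)~$\Rightarrow$~(ii), where the weak limit property must be upgraded to the weak asymptotic gain property. Here I would exploit the cocycle property together with uniform global stability. Let $\ul{\sigma},\ul{\gamma}\in\mathcal{K}$ be as in \eqref{eq:UGS-def} and $\ol{\gamma}\in\mathcal{K}$ come from the weak limit property. Given $\eps>0$ and $(x_0,u)$, choose $\delta>0$ with $\ul{\sigma}(2\delta)\le\eps$ and set $\ol{\tau}:=\ol{\tau}(\delta,x_0,u)$. Since $\phi(\cdot,x_0,u)$ is continuous and $[0,\ol{\tau}]$ compact, the infimum in the weak limit property is attained at some $t_1\in[0,\ol{\tau}]$, so $\norm{\phi(t_1,x_0,u)}\le\delta+\ol{\gamma}(\norm{u}_{\mathcal{U}})$. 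For every $t\ge\ol{\tau}\ (\ge t_1)$, the cocycle property, the shift invariance of $\mathcal{U}$, and \eqref{eq:UGS-def} give
\begin{align*}
\norm{\phi(t,x_0,u)} = \norm{\phi(t-t_1,\phi(t_1,x_0,u),u(\cdot+t_1))}
&\le \ul{\sigma}\big(\delta+\ol{\gamma}(\norm{u}_{\mathcal{U}})\big) + \ul{\gamma}(\norm{u}_{\mathcal{U}}) \\
&\le \ul{\sigma}(2\delta) + \ul{\sigma}\big(2\ol{\gamma}(\norm{u}_{\mathcal{U}})\big) + \ul{\gamma}(\norm{u}_{\mathcal{U}}) \le \eps + \gamma(\norm{u}_{\mathcal{U}}),
\end{align*}
where $\gamma(r):=\ul{\sigma}(2\ol{\gamma}(r))+\ul{\gamma}(r)\in\mathcal{K}$ does \emph{not} depend on $\eps$. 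This is the weak asymptotic gain property with gain $\gamma$. The only subtlety is the splitting $\ul{\sigma}(a+b)\le\ul{\sigma}(2a)+\ul{\sigma}(2b)$, valid for increasing $\ul{\sigma}$.

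Finally, for (ii)~$\Rightarrow$~(iii) I would construct $\beta$ explicitly. With $\ul{\sigma},\ul{\gamma}$ from \eqref{eq:UGS-def}, $\ol{\gamma}$ the weak asymptotic gain, and $\gamma:=\ul{\gamma}+\ol{\gamma}\in\mathcal{K}$, set
\begin{align*}
\beta(x_0,u,t) := \ul{\sigma}(\norm{x_0})\,\e^{-t} + \sup_{s\ge t}\big(\norm{\phi(s,x_0,u)} - \gamma(\norm{u}_{\mathcal{U}})\big)_+ .
\end{align*}
Then \eqref{eq:wISS-beta} is immediate, as the supremum dominates its value at $s=t$. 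Estimate \eqref{eq:wISS-bound on beta} follows since uniform global stability yields $\norm{\phi(s,x_0,u)}-\gamma(\norm{u}_{\mathcal{U}})\le\ul{\sigma}(\norm{x_0})-\ol{\gamma}(\norm{u}_{\mathcal{U}})\le\ul{\sigma}(\norm{x_0})$, whence $\beta(x_0,u,t)\le 2\ul{\sigma}(\norm{x_0})=:\sigma(\norm{x_0})$. It remains to check $\beta(x_0,u,\cdot)\in\mathcal{L}$ for $x_0\ne 0$: the exponential term is continuous, strictly decreasing and vanishing at infinity; the running supremum is non-increasing and tends to $0$ by the weak asymptotic gain property, and — the delicate point — is in fact continuous, since $s\mapsto(\norm{\phi(s,x_0,u)}-\gamma(\norm{u}_{\mathcal{U}}))_+$ is continuous, which makes $t\mapsto\sup_{s\ge t}(\cdots)_+$ both left- and right-continuous. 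Hence $\beta(x_0,u,\cdot)$ is continuous, strictly decreasing and vanishing at infinity, i.e.\ lies in $\mathcal{L}$.

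I expect the two main obstacles to be exactly the steps flagged above: the cocycle-plus-uniform-global-stability upgrade in (i)~$\Rightarrow$~(ii), where the choice of $\delta$ and the additive splitting of $\ul{\sigma}$ are what make the $\eps$-part and the $\norm{u}_{\mathcal{U}}$-part separate cleanly; and the verification that the running supremum defining $\beta$ is genuinely continuous (not merely one-sidedly so) in (ii)~$\Rightarrow$~(iii), which is what allows $\beta(x_0,u,\cdot)$ to be placed in the class $\mathcal{L}$ rather than in a weaker class of merely monotone decaying functions.
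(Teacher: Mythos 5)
Your proof is correct. The implication (i)$\Rightarrow$(ii) coincides with the paper's argument essentially verbatim (same choice of $\delta$ with $\ul{\sigma}(2\delta)\le\eps$, same splitting $\ul{\sigma}(a+b)\le\ul{\sigma}(2a)+\ul{\sigma}(2b)$, same resulting gain $\gamma(r)=\ul{\sigma}(2\ol{\gamma}(r))+\ul{\gamma}(r)$), and your (ii)$\Rightarrow$(i) and (iii)$\Rightarrow$(ii) steps match the paper's trivial closing of the cycle. Where you genuinely depart from the paper is the construction of $\beta$ in (ii)$\Rightarrow$(iii). The paper first builds a monotone step function $\beta_0(x_0,u,\cdot)$, distinguishing whether or not $\norm{\phi(\cdot,x_0,u)}$ eventually lies below $\gamma(\norm{u}_{\mathcal{U}})$; in the latter case it introduces times $\tau_n(x_0,u)=\sup M_n(x_0,u)$ and levels $\ul{\sigma}(\norm{x_0})/(k(x_0,u)+n)$, proves $\tau_n(x_0,u)\to\infty$, and only then interpolates between the jump points to land in $\mathcal{L}$. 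Your single formula $\beta(x_0,u,t)=\ul{\sigma}(\norm{x_0})\,\e^{-t}+\sup_{s\ge t}\big(\norm{\phi(s,x_0,u)}-\gamma(\norm{u}_{\mathcal{U}})\big)_+$ accomplishes all of this at once: the running supremum is non-increasing, tends to $0$ by the weak asymptotic gain property, and is continuous precisely for the reason you flag (for $t_n\downarrow t$ one has $\sup_{s\ge t_n}g(s)\ge g(t_n)\to g(t)$, which rules out a downward jump, while left-continuity is immediate from $\sup_{[t_n,t]}g\to g(t)$); the exponential term supplies strict monotonicity for $x_0\ne 0$. The price is the same bound $\sigma=2\ul{\sigma}$ that the paper also ends up with, so nothing is lost, and the case distinction disappears entirely. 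One cosmetic remark: the weak limit property is defined with $\ol{\gamma}\in\mathcal{K}$, whereas the asymptotic gain may be $0$; in that case simply replace $0$ by any $\mathcal{K}$-majorant in your (ii)$\Rightarrow$(i) step.
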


\begin{proof}
We first show the implication from (i) to (ii). So assume that (i) is satisfied and let $\ul{\sigma}, \ul{\gamma} \in \mathcal{K}$ and $\ol{\gamma} \in \mathcal{K}$, $\ol{\tau}(\eps,x_0,u)$ be chosen as in the definitions of uniform global stability and of the weak limit  property, respectively. 
We define the function $\gamma$ by
\begin{align}
\gamma(r) := \ul{\sigma}(2 \ol{\gamma}(r)) + \ul{\gamma}(r) \qquad (r \in \R^+_0)
\end{align}
which obviously belongs to $\mathcal{K}$. Choose and fix now $\eps > 0$ and $(x_0,u) \in X \times \mathcal{U}$ and set
\begin{align}
\tau(\eps,x_0,u) := \ol{\tau}(\delta(\eps),x_0,u) 
\qquad \text{with} \qquad 
\delta(\eps) := \frac{1}{2} \ul{\sigma}^{-1}(\eps).
\end{align}
Let $t \ge \tau(\eps,x_0,u)$. It then follows by the assumed weak limit property that there exists a $t_0 \in [0,\tau(\eps,x_0,u)]$ such that
\begin{align} \label{eq:wISS, impl (i) to (ii), 1}
\norm{\phi(t_0,x_0,u)} \le \delta(\eps) + \ol{\gamma}(\norm{u}_{\mathcal{U}})
\end{align}
It further follows by the cocycle property of $\phi$ and the assumed uniform global stability that 
\begin{align}  \label{eq:wISS, impl (i) to (ii), 2}
\norm{\phi(t,x_0,u)} &= \norm{ \phi(t-t_0, \phi(t_0,x_0,u), u(\cdot+t_0)) } \notag \\
&\le \ul{\sigma}( \norm{\phi(t_0,x_0,u)} ) + \ul{\gamma}(\norm{ u(\cdot+t_0) }).
\end{align}
Combining now~\eqref{eq:wISS, impl (i) to (ii), 1} and~\eqref{eq:wISS, impl (i) to (ii), 2} we see for every $t \ge \tau(\eps,x_0,u)$ that 
\begin{align} \label{eq:wISS, impl (i) to (ii), 3}
\norm{\phi(t,x_0,u)}  \le \ul{\sigma}(\delta(\eps) + \ol{\gamma}(\norm{u}_{\mathcal{U}}) ) + \ul{\gamma}(\norm{u}_{\mathcal{U}})
\le \eps + \gamma(\norm{u}_{\mathcal{U}}),
\end{align}
as desired. In the first inequality of~\eqref{eq:wISS, impl (i) to (ii), 3} we used that $\norm{u(\cdot+t_0)} \le \norm{u}_{\mathcal{U}}$ and in the second inequality we used the elementary fact that
\begin{align*}
\ul{\sigma}(a+b) \le \ul{\sigma}(2\max\{a,b\}) \le \ul{\sigma}(2a) + \ul{\sigma}(2b) 
\end{align*}
for all $a,b \in \R^+_0$. 
\smallskip

We now show the implication from (ii) to (iii). So assume that (ii) is satisfied and let $\ul{\sigma}, \ul{\gamma} \in \mathcal{K}$ and $\ol{\gamma} \in \mathcal{K}$ be chosen as in the definitions of uniform global stability and of the weak asymptotic gain property, respectively. 
We define the functions $\sigma, \gamma$ by
\begin{align} \label{eq:sigma, gamma: def}
\sigma(r) := 2 \ul{\sigma}(r) 
\qquad \text{and} \qquad 
\gamma(r) := \max \{\ul{\gamma}(r),\ol{\gamma}(r) \}
\end{align}
which obviously belong to $\mathcal{K}$. 
What we have to do now is to define for each given $(x_0,u) \in X \times \mathcal{U}$ a function $\beta(x_0,u,\cdot): \R^+_0 \to \R^+_0$ in such a way that $\beta(x_0,u,\cdot) \in \mathcal{L}$ for $x_0 \ne 0$ and that~\eqref{eq:wISS-beta} and~\eqref{eq:wISS-bound on beta} are satisfied. 
So let $(x_0,u) \in X \times \mathcal{U}$ be fixed for the rest of the proof (of the implication from (ii) to (iii)) and assume without loss of generality that
\begin{align} \label{eq:ass x_0 ne 0}
x_0 \ne 0.
\end{align}
(If $x_0 = 0$, then $\ul{\sigma}(\norm{x_0}) = 0$ and thus by the assumed uniform global stability the desired estimates~\eqref{eq:wISS-beta} and~\eqref{eq:wISS-bound on beta} hold true with the choice $\beta(x_0,u,\cdot) := 0$.)
In order to construct $\beta(x_0,u,\cdot)$ we distinguish two cases, namely whether or not $\norm{\phi(\cdot,x_0,u)}$ eventually lies below $\gamma(\norm{u}_{\mathcal{U}})$, that is, whether or not there exists a $t_0 \in \R^+_0$ such that
\begin{align} \label{eq:case distinction}
\norm{\phi(t,x_0,u)} \le \gamma(\norm{u}_{\mathcal{U}}) \qquad (t \ge t_0).
\end{align}

Suppose first that we are in the case where $\norm{\phi(\cdot,x_0,u)}$ eventually lies below $\gamma(\norm{u}_{\mathcal{U}})$. 
In this case, set
\begin{align}
\beta_0(x_0,u,t) := \ul{\sigma}(\norm{x_0})  \, \chi_{[\tau_0(x_0,u),\tau_{\infty}(x_0,u))}(t) \qquad (t \in \R^+_0), 
\end{align}
where
\begin{gather*}
\tau_0(x_0,u) := 0 
\qquad \text{and} \qquad
\tau_{\infty}(x_0,u) := \max\{ \sup M_{\infty}(x_0,u), 0 \} \\
M_{\infty}(x_0,u) := \big\{ t \in \R^+_0: \norm{\phi(t,x_0,u)} > \gamma(\norm{u}_{\mathcal{U}}) \big\}.
\end{gather*}
In other words, $\tau_{\infty}(x_0,u)$ is the smallest time $t_0$ for which~\eqref{eq:case distinction} holds true. In particular, we have 
\begin{align}
0 \le \tau_{\infty}(x_0,u) < \infty. 
\end{align}
It follows that $\beta_0(x_0,u,\cdot)$ is a monotonically decreasing step function satisfying
\begin{align} \label{eq:beta_0-to-zero, case 1}
\beta_0(x_0,u,t) \longrightarrow 0 \qquad (t \to \infty)
\end{align}
as well as
\begin{align} \label{eq:wISS-beta_0,case 1}
\norm{\phi(t,x_0,u)} \le \beta_0(x_0,u,t) + \gamma(\norm{u}_{\mathcal{U}})
\qquad \text{and} \qquad 
\beta_0(x_0,u,t) \le \ul{\sigma}(\norm{x_0})
\end{align}
for all $t \in \R^+_0$. (In order to see~(\ref{eq:wISS-beta_0,case 1}.a) for $t < \tau_{\infty}(x_0,u)$ use the assumed uniform global stability, and for $t \ge \tau_{\infty}(x_0,u)$ use the definition of $\tau_{\infty}(x_0,u)$ and the continuity of $\phi(\cdot,x_0,u)$.)
In view of~\eqref{eq:beta_0-to-zero, case 1} and~\eqref{eq:wISS-beta_0,case 1} we are almost done -- except that $\beta_0(x_0,u,\cdot)$ is not strictly decreasing and not continuous. 
We therefore choose $\beta(x_0,u,\cdot) \in \mathcal{L}$ such that 
\begin{align} \label{eq:beta, case 1}
\beta_0(x_0,u,t) \le \beta(x_0,u,t) \le 2 \ul{\sigma}(\norm{x_0}) \qquad (t \in \R^+_0).
\end{align}
(Simply interpolate linearly between the points $(0,2\ul{\sigma}(\norm{x_0})$ and $(\tau_{\infty}(x_0,u), \ul{\sigma}(\norm{x_0}))$ and exponentially between the points $(\tau_{\infty}(x_0,u), \ul{\sigma}(\norm{x_0}))$ and $(\infty,0)$). Combining~\eqref{eq:wISS-beta_0,case 1} and~\eqref{eq:beta, case 1} we finally obtain~\eqref{eq:wISS-beta} and~\eqref{eq:wISS-bound on beta}, which concludes the proof of the implication from (ii) to (iii) in the case where $\norm{\phi(\cdot,x_0,u)}$ eventually lies below $\gamma(\norm{u}_{\mathcal{U}})$.
\smallskip

Suppose now that we are in the case where $\norm{\phi(\cdot,x_0,u)}$ does not eventually lie below $\gamma(\norm{u}_{\mathcal{U}})$.
In this case, there exists a unique $k(x_0,u) \in \N$ such that 
\begin{align} \label{eq:k(x_0,u), def}
\frac{\ul{\sigma}(\norm{x_0})}{k(x_0,u)+1} + \gamma(\norm{u}_{\mathcal{U}}) < \sup_{t \in \R^+_0} \norm{\phi(t,x_0,u)} \le \frac{\ul{\sigma}(\norm{x_0})}{k(x_0,u)} + \gamma(\norm{u}_{\mathcal{U}})
\end{align}
(use the assumed uniform global stability). Set now
\begin{align}
\beta_0(x_0,u,t) := \sum_{n=0}^{\infty} \frac{\ul{\sigma}(\norm{x_0})}{k(x_0,u)+n} \, \chi_{[\tau_n(x_0,u),\tau_{n+1}(x_0,u))}(t) \qquad (t \in \R^+_0),
\end{align}
where
\begin{gather*}
\tau_0(x_0,u) := 0 
\qquad \text{and} \qquad
\tau_n(x_0,u) := \sup M_n(x_0,u) \\
M_n(x_0,u) := \Big\{ t\in \R^+_0: \norm{\phi(t,x_0,u)} > \frac{\ul{\sigma}(\norm{x_0})}{k(x_0,u)+n} + \gamma(\norm{u}_{\mathcal{U}}) \Big\}
\end{gather*}
for $n \in \N$. We then have 
\begin{gather}
0 < \tau_n(x_0,u) < \infty \qquad (n \in \N) \label{eq:tau_n,1}\\
\norm{\phi(t,x_0,u)} \le \frac{\ul{\sigma}(\norm{x_0})}{k(x_0,u)+n} + \gamma(\norm{u}_{\mathcal{U}}) \qquad (t \ge \tau_n(x_0,u) \text{ and } n \in \N_0) \label{eq:tau_n,2}\\
\tau_n(x_0,u) < \tau_{n+1}(x_0,u) \qquad (n \in \N_0)
\qquad \text{and} \qquad
\tau_n(x_0,u) \longrightarrow \infty \qquad (n \to \infty). \label{eq:tau_n,3+4}
\end{gather}
(In order to see~\eqref{eq:tau_n,1}, notice that $M_n(x_0,u)$ is bounded by the assumed weak asymptotic gain property and that $\emptyset \ne M_n(x_0,u) \ne \{0\}$ by~\eqref{eq:k(x_0,u), def} and by the continuity of $\phi(\cdot,x_0,u)$. 
In order to see~\eqref{eq:tau_n,2} for $n = 0$, just use~\eqref{eq:k(x_0,u), def} -- and to see it for $n \in \N$ use the definition of $\tau_n(x_0,u)$ and the continuity of $\phi(\cdot,x_0,u)$. 
In order to see~(\ref{eq:tau_n,3+4}.a) for $n=0$, just recall \eqref{eq:tau_n,1} -- and to see it for $n \in \N$ notice first that $M_n(x_0,u) \subset M_{n+1}(x_0,u)$ and second that 
\begin{align} \label{eq:auxiliary, tau_n strictly incr}
\norm{\phi(t,x_0,u)}\Big|_{t=\tau_n(x_0,u)} = \frac{\ul{\sigma}(\norm{x_0})}{k(x_0,u)+n} + \gamma(\norm{u}_{\mathcal{U}})
\end{align}
by virtue of~\eqref{eq:tau_n,2} and the  continuity of $\phi(\cdot,x_0,u)$. Consequently, $\tau_n(x_0,u) \le \tau_{n+1}(x_0,u)$ and $\tau_n(x_0,u) \ne \tau_{n+1}(x_0,u)$ because otherwise~\eqref{eq:auxiliary, tau_n strictly incr} would imply that $\sigma(\norm{x_0}) = 0$. Contradiction to~\eqref{eq:ass x_0 ne 0}!
And finally to see~(\ref{eq:tau_n,3+4}.b), recall that $\norm{\phi(\cdot,x_0,u)}$ does not eventually lie below $\gamma(\norm{u}_{\mathcal{U}})$. So, for every $t_0$ there exists a $t \ge t_0$ such that $\norm{\phi(t,x_0,u)} > \gamma(\norm{u}_{\mathcal{U}})$ and therefore there also exists an $n_0 \in \N$ such that 
\begin{align*}
\norm{\phi(t,x_0,u)} > \frac{\ul{\sigma}(\norm{x_0})}{k(x_0,u)+n_0} + \gamma(\norm{u}_{\mathcal{U}}).
\end{align*}
It thus follows that $\tau_n(x_0,u) \ge \tau_{n_0}(x_0,u) \ge t \ge t_0$ for all $n \ge n_0$, which proves the claimed convergence~(\ref{eq:tau_n,3+4}.b) because $t_0$ was arbitrary.)
With the help of~\eqref{eq:tau_n,1}, \eqref{eq:tau_n,2}, \eqref{eq:tau_n,3+4} it follows that $\beta_0(x_0,u,\cdot)$ is a monotonically decreasing step function satisfying
\begin{align} \label{eq:beta_0-to-zero, case 2}
\beta_0(x_0,u,t) \longrightarrow 0 \qquad (t \to \infty)
\end{align}
as well as
\begin{align} \label{eq:wISS-beta_0,case 2}
\norm{\phi(t,x_0,u)} \le \beta_0(x_0,u,t) + \gamma(\norm{u}_{\mathcal{U}})
\qquad \text{and} \qquad 
\beta_0(x_0,u,t) \le \ul{\sigma}(\norm{x_0})
\end{align}
for all $t \in \R^+_0$. (Indeed, for every $t \in \R^+_0$ there exists by~\eqref{eq:tau_n,3+4} a unique $n \in \N_0$ such that $t \in [\tau_n(x_0,u),\tau_{n+1}(x_0,u))$ and therefore
\begin{align*}
\beta_0(x_0,u,t) = \frac{\ul{\sigma}(\norm{x_0})}{k(x_0,u)+n}.
\end{align*} 
So, \eqref{eq:beta_0-to-zero, case 2} follows by virtue of~(\ref{eq:tau_n,3+4}.b) while (\ref{eq:wISS-beta_0,case 2}.a) follows by virtue of~\eqref{eq:tau_n,2}.)
We can now choose $\beta(x_0,u,\cdot) \in \mathcal{L}$ such that 
\begin{align} \label{eq:beta, case 2}
\beta_0(x_0,u,t) \le \beta(x_0,u,t) \le 2 \ul{\sigma}(\norm{x_0}) \qquad (t \in \R^+_0).
\end{align}
(Simply interpolate linearly between the points $(0,2\ul{\sigma}(\norm{x_0})$, $(\tau_1(x_0,u), \ul{\sigma}(\norm{x_0})/k(x_0,u))$, $(\tau_2(x_0,u), \ul{\sigma}(\norm{x_0})/(k(x_0,u)+1))$, \dots ). 
Combining~\eqref{eq:wISS-beta_0,case 2} and~\eqref{eq:beta, case 2} we finally obtain~\eqref{eq:wISS-beta} and~\eqref{eq:wISS-bound on beta}, which concludes the proof of the implication from (ii) to (iii) in the case where $\norm{\phi(\cdot,x_0,u)}$ does not eventually lie below $\gamma(\norm{u}_{\mathcal{U}})$.
\smallskip

We finally show the implication from (iii) to (i). So assume that (iii) is satisfied and let $\sigma, \gamma \in \mathcal{K}$ and $\beta$ be as in~(iii).  
Combining~\eqref{eq:wISS-beta} and~\eqref{eq:wISS-bound on beta} we immediately see that $\mathfrak{S}$ is uniformly globally stable and it remains to show that it also has the weak limit property. Set $\ol{\gamma} := \gamma \in \mathcal{K}$ and let $\eps > 0$ and $(x_0,u) \in X \times \mathcal{U}$ be given. Choose a time $\ol{\tau}(\eps,x_0,u) \in \R^+_0$ so large that
\begin{align}
\beta(x_0,u,t) \le \eps \qquad (t \ge \ol{\tau}(\eps,x_0,u)).
\end{align}
(Such a time $\ol{\tau}(\eps,x_0,u) $ exists, for if $x_0 \ne 0$ then $\beta(x_0,u,\cdot) \in \mathcal{L}$ by assumption and if $x_0 = 0$ then $\beta(x_0,u,\cdot) = 0$ by~\eqref{eq:wISS-bound on beta}.) 
It then follows by~\eqref{eq:wISS-beta} that 
\begin{align}
\norm{\phi(t,x_0,u)} \le \eps + \ol{\gamma}(\norm{u}_{\mathcal{U}})
\end{align}
for all $t \ge \ol{\tau}(\eps,x_0,u)$. Consequently, $\mathfrak{S}$ has the weak limit (and also the weak asymptotic gain) property, as desired.
\end{proof}

\section{Weak input-to-state stability and its relation to strong input-to-state stability} \label{sect:4}

\subsection{A counterexample} \label{sect:4.1}

With the following example, we show that weak input-to-state stability is, in general, strictly weaker than strong input-to-state stability. We use modulated-linear systems with suitable input spaces $\mathcal{U} \subsetneq L^p(\R^+_0,\R)$ to show this. Such modulated-linear systems correspond to evolution equations of the form~\eqref{eq:modul-lin, def}.  

\begin{ex}
Choose and fix a $p \in [1,\infty) \cup \{\infty\}$ and a function $\alpha: \R \to \R^+_0$ with $\alpha(0) = 0$ such that the set 
\begin{gather}
\mathcal{U} := \bigg\{ u \in L^p(\R^+_0,\R): \alpha \circ u \text{ is  locally integrable but } \int_0^{\infty} \alpha(u(s)) \d s =  \infty \bigg\}
\end{gather}
is non-empty 
and endow $\mathcal{U}$ with the norm $\norm{\cdot}_{\mathcal{U}} := \norm{\cdot}_p$. 
(Simple choices for such a function are, for instance, $\alpha(r) := |r|$ in case $p \ne 1$ and $\alpha(r) := |r|^{1/2}$ in case $p=1$.) Also, let $A$ be the generator of a strongly stable semigroup on a Banach space $X$ and define
\begin{align}
\phi(t,x_0,u) := \e^{A ( \int_0^t \alpha(u(s)) \d s )} x_0 \qquad ((t,x_0,u) \in \R^+_0\times X \times \mathcal{U}).
\end{align}
We now show that $\mathfrak{S} := (X,\mathcal{U},\phi)$ is a weakly but not strongly input-to-state stable system. 
In particular, we see that the implications stated as open questions 
in the very last paragraph of~\cite{MiWi16a} do not hold true in general. 
It is elementary to check that $\mathcal{U}$ is invariant under shifts to the left and under concatenations. It is also elementary to check that $\phi(\cdot,x_0,u)$ is continuous for every $(x_0,u) \in X \times \mathcal{U}$ and that $\phi$ is cocyclic and causal. So, in other words, $\mathfrak{S}$ is a dynamical system with inputs. 
Since $\e^{A\cdot}$ is strongly stable, it follows that
\begin{align} \label{eq:wISS ne sISS, 1}
M:= \sup_{v\in\R^+_0} \norm{\e^{A v}} < \infty
\end{align}
by the uniform boundedness principle, and therefore $\mathfrak{S}$ is uniformly globally stable with $\ul{\sigma}(r) := M r$ and $\ul{\gamma} \in \mathcal{K}$ arbitrary. 
Since $\e^{A\cdot}$ is strongly stable and since
\begin{align}
\int_0^t \alpha(u(s)) \d s \longrightarrow \infty \qquad (t \to \infty)
\end{align}
for every $u \in \mathcal{U}$, 
it further follows that $\mathfrak{S}$ is of weak asymptotic gain $0$. 
So, we see that $\mathfrak{S}$ is weakly input-to-state stable and it remains to show that it is not of strong asymptotic gain. Seeking a contradiction, assume that $\mathfrak{S}$ is of strong asymptotic gain $\ol{\gamma}$ with corresponding times $\ol{\tau}(\eps,x_0)$. Choose now an arbitrary $u_0 \in \mathcal{U}$ (non-empty!), let $\eps := 1$, and choose $x_0 \in X$ such that
\begin{align} \label{eq:wISS ne sISS, 2}
\norm{x_0} > \eps + \ol{\gamma}(\norm{u_0}_{\mathcal{U}}).
\end{align}
Also, define $\ol{\tau} := \ol{\tau}(\eps,x_0)$ and $u := 0 \, \&_{\ol{\tau}} \, u_0$. Clearly, $u \in \mathcal{U}$ and
\begin{align} \label{eq:wISS ne sISS, 3}
\norm{u}_{\mathcal{U}} = \norm{u_0}_{\mathcal{U}} 
\qquad \text{and} \qquad
u|_{[0,\ol{\tau}]} = 0.
\end{align}
So, by the assumed asymptotic strong gain property combined with~\eqref{eq:wISS ne sISS, 2}, \eqref{eq:wISS ne sISS, 3} and $\alpha(0) = 0$, 
we get that
\begin{align}
\eps + \ol{\gamma}(\norm{u}_{\mathcal{U}}) < \norm{x_0} = \norm{ \e^{A ( \int_0^{\ol{\tau}} \alpha(u(s)) \d s )} x_0 }
= \norm{\phi(\ol{\tau},x_0,u)} \le \eps + \ol{\gamma}(\norm{u}_{\mathcal{U}}).
\end{align}
Contradiction! $\blacktriangleleft$
\end{ex}

\subsection{Some positive results} \label{sect:4.2}

While 
weak and strong input-to-state stability are inequivalent for modulated-linear systems with general input spaces $\mathcal{U}$, they coincide for modulated-linear systems with input space $\mathcal{U} = L^p(\R^+_0,U)$.

\begin{prop} \label{prop:pos-result 1}
Suppose $X$ is a Banach space and $\mathcal{U} := L^p(\R^+_0,\R)$ for some $p \in [1,\infty) \cup \{\infty\}$. Suppose further that $A$ is a semigroup generator on $X$ and $\alpha: \R \to \R^+_0$ is a continuous function such that $\alpha \circ u$ is locally integrable for every $u \in \mathcal{U}$ (for example $\alpha(r) := |r|$). 
Then $\mathfrak{S} := (X,\mathcal{U},\phi)$ with
\begin{align*}
\phi(t,x_0,u) := \e^{A ( \int_0^t \alpha(u(s)) \d s )} x_0 \qquad ((t,x_0,u) \in \R^+_0\times X \times \mathcal{U})
\end{align*} 
is a dynamical system with inputs and $\mathfrak{S}$ is weakly input-to-state stable if and only if it 
is strongly input-to-state stable.
\end{prop}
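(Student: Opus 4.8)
The implication from strong to weak input-to-state stability is immediate, since strong asymptotic gain trivially implies weak asymptotic gain, so the whole content lies in the converse. That $\mathfrak{S}$ is a dynamical system with inputs is routine (exactly as in the Example), so I concentrate on the equivalence. Write $T(v) := \e^{Av}$ for the semigroup and $\tau(t,u) := \int_0^t \alpha(u(s)) \d s \ge 0$ for the ``internal clock'', so that $\phi(t,x_0,u) = T(\tau(t,u)) x_0$. Assuming $\mathfrak{S}$ weakly input-to-state stable, it is in particular uniformly globally stable, so it remains only to upgrade the weak asymptotic gain to a strong one: to produce a single $\ol{\gamma} \in \mathcal{K}$ and, for each $\eps > 0$ and $x_0 \in X$, a time $\ol{\tau}(\eps,x_0)$ \emph{independent of $u$} with $\norm{\phi(t,x_0,u)} \le \eps + \ol{\gamma}(\norm{u}_p)$ for all $t \ge \ol{\tau}(\eps,x_0)$ and all $u$. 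The plan is to first extract strong rigidity of $\alpha$ and of $T$ from weak input-to-state stability, and then to convert a uniform lower bound on $\tau(t,u)$ into the desired $u$-independence.

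First I would exploit the zero input. Since $\norm{0}_p = 0$ and $\ol{\gamma}(0) = 0$, weak asymptotic gain applied to $u = 0$ forces $\norm{T(\alpha(0)t)x_0} \to 0$; together with uniform global stability and the uniform boundedness principle this shows, provided $\alpha(0) > 0$, that $T$ is strongly stable and bounded, $M := \sup_{v \ge 0}\norm{T(v)} < \infty$. The crucial preliminary step is therefore to prove $\alpha(0) > 0$ (and, when $p = \infty$, even $\alpha(r) > 0$ for every $r$). For $p < \infty$ I would invoke the earlier Lemma, by which weak asymptotic gain upgrades to weak asymptotic gain $0$; testing this against $u = r_0 \chi_{[0,L]}$, for which $\tau(t,u) = \alpha(r_0) L =: c$ is constant once $t \ge L$, yields $T(c)x_0 = 0$ for all $x_0$, and letting $L$ range over $(0,\infty)$ (with $r_0$ fixed so that $\alpha(r_0) > 0$) gives $T(c) = 0$ for all $c > 0$, contradicting strong continuity; hence $\alpha(0) > 0$. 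For $p = \infty$ the Lemma is unavailable, but here the constant inputs $u \equiv r$ are admissible: if $\alpha(r) = 0$ for some $r$, the trajectory is the constant $x_0$, so weak asymptotic gain forces $\norm{x_0} \le \ol{\gamma}(|r|)$ for every $x_0$, which is absurd.

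Next I would establish the uniform lower bound on the clock, where the two cases diverge. For $p < \infty$, Chebyshev's inequality bounds the measure of $\{ s \in [0,t] : |u(s)| > \delta \}$ by $\norm{u}_p^p/\delta^p$, while on the complement $\alpha(u(s)) \ge \tfrac12 \alpha(0)$ for $\delta$ small (by continuity of $\alpha$ and $\alpha(0) > 0$); this gives
\begin{align*}
\tau(t,u) \ge \tfrac12 \alpha(0) \big( t - \norm{u}_p^p/\delta^p \big).
\end{align*}
For $p = \infty$, every $u$ with $\norm{u}_\infty \le R$ takes values in the compact set $[-R,R]$, on which $m_R := \min_{|r| \le R} \alpha(r) > 0$, so simply $\tau(t,u) \ge m_R\, t$.

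Finally I would assemble the strong asymptotic gain by a two-regime argument, and this is the step I expect to be the genuine obstacle, since it is exactly here that the time must be made independent of $u$. Fix $\eps > 0$ and $x_0$, choose $V = V(\eps,x_0)$ with $\norm{T(v)x_0} \le \eps$ for $v \ge V$ (possible by strong stability), and set $\ol{\gamma}(r) := r$. For inputs of small norm the clock bound forces $\tau(t,u) \ge V$ once $t$ exceeds a threshold depending only on $\eps, x_0$, giving $\norm{\phi(t,x_0,u)} \le \eps$; for inputs of large norm one instead uses $\norm{\phi(t,x_0,u)} \le M \norm{x_0}$ and absorbs this into $\eps + \ol{\gamma}(\norm{u})$, the point being that in the regime where the clock estimate is still weak the input norm is necessarily already $\ge M\norm{x_0}$. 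Because $\ol{\tau}(\eps,x_0)$ may depend on $x_0$, one can choose it large enough (namely $\ge 4V/\alpha(0)$ together with $\ge \tfrac{2}{\delta^p}(M\norm{x_0})^p$ when $p < \infty$, and analogously via $m_R$ when $p = \infty$) that these two regimes exhaust all $u$ at every $t \ge \ol{\tau}(\eps,x_0)$. This yields a single $u$-independent time, and together with the uniform global stability inherited from weak input-to-state stability it proves strong input-to-state stability.
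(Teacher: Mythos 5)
Your proposal is correct and follows essentially the same route as the paper's proof: establish $\alpha(0)>0$ (resp.\ $\alpha(r)>0$ for all $r$ when $p=\infty$), deduce strong stability and boundedness of the semigroup from the zero input, obtain a $u$-uniform lower bound on the internal clock via Chebyshev (resp.\ a minimum over a compact set), and conclude with the same two-regime split according to the size of $\norm{u}_{\mathcal{U}}$ relative to $\norm{x_0}$. The only deviations are cosmetic -- your detour through the Lemma and characteristic-function inputs in the first step for $p<\infty$ (where testing $u=0$ directly suffices, and where the trivial sub-case $\alpha\equiv 0$ should be noted), and the choice $\ol{\gamma}(r)=r$ with threshold $M\norm{x_0}$ in place of the paper's $\ol{\gamma}(r)=Mr$ with threshold $\norm{x_0}$.
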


\begin{proof}
It is easily verified that $\mathfrak{S}$ is a dynamical system with inputs. Also, one of the claimed implications is trivial. So, suppose that $\mathfrak{S}$ is weakly input-to-state stable. We have to show that $\mathfrak{S}$ is also of strong asymptotic gain and thus strongly  input-to-state stable.
\smallskip

As a first step, we show that $\alpha(0) > 0$ in case $p < \infty$ and that $\alpha(r) > 0$ for every $r \in \R$ in case $p= \infty$.
%
Seeking a contradiction, we assume that there is a constant function $u_r \equiv r \in L^p(\R^+_0,\R) = \mathcal{U}$ such that $\alpha(r) = 0$. We then have
\begin{align} \label{eq:modul-lin, step 1, 1}
\phi(t,x_0,u_r) = \e^{A ( \int_0^t \alpha(u_r(s)) \d s )} x_0 = x_0 \qquad (t \ge 0)
\end{align}
for every $x_0 \in X$. Since $\mathfrak{S}$ is of weak asymptotic gain $\ol{\gamma}$, say, we also have that
\begin{align}  \label{eq:modul-lin, step 1, 2}
\norm{\phi(t,x_0,u_r)} \le \eps + \ol{\gamma}(\norm{u_r}_{\mathcal{U}}) \qquad (t \ge \ol{\tau}(\eps,x_0,u_r))
\end{align}
for every $\eps >0$ and every $x_0 \in X$. Choosing now $x_0 \in X$ such that $\norm{x_0}/2 > \ol{\gamma}(\norm{u_r}_{\mathcal{U}})$ and lettig $\eps := \norm{x_0}/2$, we obtain a contradiction by combining~\eqref{eq:modul-lin, step 1, 1} and~\eqref{eq:modul-lin, step 1, 2}. 
\smallskip

As a second step, we observe that the semigroup $\e^{A\cdot}$ is strongly stable. 
Indeed, since $\mathfrak{S}$ is of weak asymptotic gain, we see by choosing $u := 0 \in \mathcal{U}$ that
\begin{align}
\e^{A(\alpha(0)t)}x_0 = \phi(t,x_0,0) \longrightarrow 0 \qquad (t \to \infty)
\end{align}
for every $x_0 \in X$. Since $\alpha(0) > 0$ by the first step, the claimed strong stability follows. 
\smallskip

As a third step, we show that for every $\ol{v}, \ol{r} \in \R^+_0$ there exists a time $\ol{\tau} = \ol{\tau}_{\ol{v},\ol{r}} \in \R^+_0$ such that
\begin{align} \label{eq:modul-lin, step 3}
\int_0^{\ol{\tau}} \alpha(u(s)) \d s \ge \ol{v} 
\qquad (u \in \mathcal{U} \text{ with } \norm{u}_{\mathcal{U}} \le \ol{r}). 
\end{align} 
So let $\ol{v}, \ol{r} \in \R^+_0$ be given and fixed. 
In case $p < \infty$, we know by the first step and the continuity of $\alpha$ that 
\begin{align*}
c_{\delta} := \inf_{|r| \le \delta} \alpha(r) > 0 
\end{align*}
for some $\delta > 0$. Setting 
$\ol{\tau} = \ol{\tau}_{\ol{v},\ol{r}} := \ol{v}/c_{\delta} + \ol{r}^p / \delta^p$
and writing 
\begin{align*}
J^{u}_{>\delta} := \{ s\in [0,\ol{\tau}]: |u(s)| > \delta \}
\qquad \text{and} \qquad
J^{u}_{\le \delta} := \{ s\in [0,\ol{\tau}]: |u(s)| \le \delta \}
\end{align*}
for $u \in \mathcal{U}$, we see for every $u \in \mathcal{U}$ with $\norm{u}_{\mathcal{U}} \le \ol{r}$ that 
\begin{align*}
\lambda( J^{u}_{>\delta} ) \le 1/\delta^p \, \int_0^{\ol{\tau}} |u(s)|^p \d s \le \ol{r}^p / \delta^p
\end{align*}
and therefore
\begin{align*}
\int_0^{\ol{\tau}} \alpha(u(s)) \d s \ge \int_{J^{u}_{\le \delta}} \alpha(u(s)) \d s \ge c_{\delta} \lambda( J^{u}_{\le \delta} ) 
= c_{\delta} \big( \ol{\tau} - \lambda( J^{u}_{> \delta} ) \big)
\ge c_{\delta} \big( \ol{\tau} - \ol{r}^p / \delta^p \big)
= \ol{v},
\end{align*}
which proves~\eqref{eq:modul-lin, step 3} in the case $p < \infty$.
In case $p = \infty$, we know by the first step and the continuity of $\alpha$ that 
\begin{align*}
c_{\ol{r}} := \inf_{|r| \le \ol{r}} \alpha(r) > 0. 
\end{align*}
Setting $\ol{\tau} = \ol{\tau}_{\ol{v},\ol{r}} := \ol{v}/c_{\ol{r}}$, we see for every $u \in \mathcal{U}$ with $\norm{u}_{\mathcal{U}} \le \ol{r}$ that
\begin{align*}
\int_0^{\ol{\tau}} \alpha(u(s)) \d s \ge c_{\ol{r}} \ol{\tau} = \ol{v},
\end{align*}
which proves~\eqref{eq:modul-lin, step 3} in the case $p < \infty$.
\smallskip

As a fourth and last step, we finally show that $\mathfrak{S}$ is of strong asymptotic gain. 
Since $\e^{A\cdot}$ is strongly stable by the second step, we have that
\begin{align} \label{eq:modul-lin, step 4, 1}
M:= \sup_{v\in\R^+_0} \norm{\e^{A v}} < \infty
\end{align}
by the uniform boundedness principle and, moreover, we have that for every $\eps > 0$ and $x_0 \in X$ there is a $\ol{v}(\eps,x_0) \in \R^+_0$ such that
\begin{align} \label{eq:modul-lin, step 4, 2}
\norm{\e^{Av}x_0} \le \eps \qquad (v \ge \ol{v}(\eps,x_0)).
\end{align}
Set now $\ol{\gamma}(r) := M r$ for $r \in \R^+_0$, let $\eps > 0$ and $x_0 \in X$ be given and fixed, and define 
\begin{align} \label{eq:modul-lin, step 4, 3}
\ol{\tau}(\eps,x_0) := \ol{\tau}_{\ol{v}(\eps,x_0),\norm{x_0}},
\end{align} 
where $\ol{\tau}_{\ol{v},\ol{r}}$ is chosen as in the third step. 
It then follows by~\eqref{eq:modul-lin, step 3}, \eqref{eq:modul-lin, step 4, 2} and \eqref{eq:modul-lin, step 4, 3} that for every $u \in \mathcal{U}$ with $\norm{u}_{\mathcal{U}} \le \norm{x_0}$ 
\begin{align} \label{eq:modul-lin, step 4, 4}
\norm{\phi(t,x_0,u)} = \norm{ \e^{A ( \int_0^t \alpha(u(s)) \d s )} x_0 } \le \eps \le \eps + \ol{\gamma}(\norm{u}_{\mathcal{U}})
\end{align}
for all $t \ge \ol{\tau}(\eps,x_0)$. 
It also follows by~\eqref{eq:modul-lin, step 4, 1} that for every $u \in \mathcal{U}$ with $\norm{u}_{\mathcal{U}} > \norm{x_0}$ 
\begin{align}  \label{eq:modul-lin, step 4, 5}
\norm{\phi(t,x_0,u)} = \norm{ \e^{A ( \int_0^t \alpha(u(s)) \d s )} x_0 } \le M \norm{u}_{\mathcal{U}} \le \eps + \ol{\gamma}(\norm{u}_{\mathcal{U}})
\end{align}
for all $t \ge 0$. 
So, taking~\eqref{eq:modul-lin, step 4, 4} and~\eqref{eq:modul-lin, step 4, 5} together we see that $\mathfrak{S}$ is of strong asymptotic gain $\ol{\gamma}$, as desired.
\end{proof}

Similarly, weak and strong input-to-state stability coincide for linear systems with input space $\mathcal{U} = L^p(\R^+_0,U)$.

\begin{prop} \label{prop:pos-result 2}
Suppose $X$, $U$ are Banach spaces and $\mathcal{U} := L^p(\R^+_0,U)$ for some $p \in [1,\infty) \cup \{\infty\}$. Suppose further that $A$ is a semigroup generator on $X$ and that $B \in L(U,X_{-1})$ is a $\mathcal{U}$-admissible control operator for $A$, that is,
\begin{align} \label{eq:pos-result 2, B adm}
\Phi_t(u) := \int_0^t \e^{A_{-1}(t-s)} B u(s) \d s \in X 
\qquad (t \in \R^+_0 \text{ and } u \in \mathcal{U}),
\end{align}
where $A_{-1}$ is the generator of the extrapolation of the semigroup $\e^{A\cdot}$ to the extrapolation space $X_{-1}$ of $A$. 
In case $p = \infty$ additionally assume that $t \mapsto \Phi_t(u) \in X$ is continuous for every $u \in \mathcal{U}$. Then $\mathfrak{S} := (X,\mathcal{U},\phi)$ with 
\begin{align*}
\phi(t,x_0,u) := \e^{A t}x_0 + \int_0^t \e^{A_{-1}(t-s)}B u(s) \d s 
\qquad ((t,x_0,u) \in \R^+_0 \times X \times \mathcal{U})
\end{align*}
is a dynamical system with inputs and $\mathfrak{S}$ is weakly input-to-state stable if and only if it 
is strongly input-to-state stable.
\end{prop}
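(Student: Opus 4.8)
The plan is to exploit the defining feature of this system, namely that it is genuinely \emph{linear} in the pair $(x_0,u)$ through the variation-of-constants formula $\phi(t,x_0,u) = \e^{A t}x_0 + \Phi_t(u)$. Since strong asymptotic gain trivially implies weak asymptotic gain, only the implication from weak to strong input-to-state stability requires work, and the superposition structure is what makes it tractable: the zero-input response $\e^{A t}x_0$ and the zero-state response $\Phi_t(u)$ can be treated separately. Before that, I would briefly record that $\mathfrak{S}$ really is a dynamical system with inputs: left-shift and concatenation invariance of $\mathcal{U} = L^p$ are clear, continuity of $t \mapsto \phi(t,x_0,u)$ follows from strong continuity of $\e^{A\cdot}$ together with the admissibility of $B$ (and, for $p = \infty$, the extra continuity hypothesis), and the cocycle property reduces to the identity $\Phi_{t+s}(u) = \e^{A t}\Phi_s(u) + \Phi_t(u(\cdot+s))$, obtained by splitting the defining integral at $s$ and using that $\e^{A_{-1}t}$ restricts to $\e^{A t}$ on $X$.

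For the main implication, assume $\mathfrak{S}$ is weakly input-to-state stable, so that it is uniformly globally stable (with some $\ul\sigma,\ul\gamma \in \mathcal{K}$) and of weak asymptotic gain $\ol\gamma$. From these two hypotheses I would extract two facts. First, setting $x_0 = 0$ in the uniform-global-stability estimate and using $\phi(t,0,u) = \Phi_t(u)$ together with $\ul\sigma(0) = 0$ yields the uniform-in-time bound $\norm{\Phi_t(u)} \le \ul\gamma(\norm{u}_{\mathcal{U}})$ for all $t \ge 0$ and all $u \in \mathcal{U}$. Second, setting $u = 0$ in the weak-asymptotic-gain estimate and using $\ol\gamma(0) = 0$ shows that for every $\eps > 0$ and $x_0 \in X$ one has $\norm{\e^{A t}x_0} = \norm{\phi(t,x_0,0)} \le \eps$ for all sufficiently large $t$; that is, the semigroup $\e^{A\cdot}$ is strongly stable.

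I would then simply combine the two facts. Put $\ol\gamma := \ul\gamma$. Given $\eps > 0$ and $x_0 \in X$, strong stability lets me pick a time $\ol\tau(\eps,x_0)$ with $\norm{\e^{A t}x_0} \le \eps$ for all $t \ge \ol\tau(\eps,x_0)$. The triangle inequality and the uniform bound on $\Phi_t$ then give, for every $t \ge \ol\tau(\eps,x_0)$ and \emph{every} $u \in \mathcal{U}$, the estimate $\norm{\phi(t,x_0,u)} \le \norm{\e^{A t}x_0} + \norm{\Phi_t(u)} \le \eps + \ul\gamma(\norm{u}_{\mathcal{U}})$. This is precisely the strong asymptotic gain property with gain $\ul\gamma$, and together with uniform global stability it establishes strong input-to-state stability.

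The step one might expect to be the obstacle is the passage from a weak gain -- where the attractivity time $\ol\tau$ is allowed to depend on the input $u$ -- to a strong gain that is uniform over all $u$. This apparent difficulty dissolves because of linearity: the zero-state response $\Phi_t(u)$ never needs to \emph{decay}; it is already controlled uniformly in time by the uniform-global-stability gain $\ul\gamma(\norm{u}_{\mathcal{U}})$, so the only quantity that must actually become small is the finite, input-independent term $\e^{A t}x_0$, whose decay time is permitted to depend on $x_0$. This is exactly the feature that fails for the modulated-linear systems of the preceding counterexample, where the input enters nonlinearly and superposition is unavailable, and it is why the linear case collapses to a short argument rather than the multi-step analysis needed there.
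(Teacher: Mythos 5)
Your proposal is correct and follows essentially the same route as the paper: both split $\phi(t,x_0,u)$ into the zero-input response $\e^{At}x_0$ and the zero-state response $\Phi_t(u)$, bound the former via the weak asymptotic gain property applied at $u=0$ (with attractivity time $\ol\tau(\eps,x_0,0)$) and the latter uniformly in time via the uniform global stability estimate at $x_0=0$, and conclude strong asymptotic gain with gain $\ul\gamma$. Your closing remark on why superposition is exactly what dissolves the $u$-dependence of the attractivity time matches the paper's intent precisely.
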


\begin{proof}
Since $B$ is $\mathcal{U}$-admissible for $A$, it follows that $\phi(\cdot,x_0,u)$ is a continuous function from $\R^+_0$ to $X$ by virtue of Proposition~2.3 of~\cite{We89}. (It should be noted here that because of~\eqref{eq:pos-result 2, B adm} the linear operator $\Phi_t: \mathcal{U} \to X$ is closed and thus bounded, whence the aforementioned propostion from~\cite{We89} is applicable.) Also, it is clear that $\phi$ is cocyclic and causal. So, $\mathfrak{S}$ is a dynamical system with inputs.
It remains to show that if $\mathfrak{S}$ is weakly input-to-state stable, then it is of strong asymptotic gain and hence strongly input-to-state stable (the other implication being trivial).  
So, let $\mathfrak{S}$ be 
uniformly globally stable (with corresponding functions $\ul{\sigma}$, $\ul{\gamma}$) and of weak asymptotic gain $\ol{\gamma}$ (with corresponding times $\ol{\tau}(\eps,x_0,u)$). Also, let $\eps > 0$ and $x_0 \in X$. We then have for every $t \ge \ol{\tau}(\eps,x_0,0)$ and every $u \in \mathcal{U}$ that 
\begin{align*}
\norm{\phi(t,x_0,u)} 
&\le \norm{\e^{A t}x_0} + \norm{\Phi_t(u)}
= \norm{\phi(t,x_0,0)} + \norm{\phi(t,0,u)} \\
&\le \eps + \ol{\gamma}(0) + \ul{\sigma}(0) + \ul{\gamma}(\norm{u}_{\mathcal{U}})
= \eps + \ul{\gamma}(\norm{u}_{\mathcal{U}}).
\end{align*}
In particular, $\mathfrak{S}$ is of strong asymptotic gain $\ul{\gamma}$, as desired. 
\end{proof}

Clearly, every bounded control operator $B \in L(U,X)$ is $\mathcal{U}$-admissible for $A$. Sufficient conditions for unbounded control operators $B \in L(U,X_{-1})$ to be $\mathcal{U}$-admissible for $A$ can be found in~\cite{JaPa04}, \cite{TuWe}, for instance. A sufficient conditions for the continuity of $t \mapsto \Phi_t(u) \in X$ 
in the case $p = \infty$ is that $X$ be a Hilbert space, $U$ be finite-dimensional and that $A$ generate an exponentially stable analytic  semigroup on $X$ that is similar to a contraction semigroup (Theorem~1 of~\cite{JaScZw17}).

\subsection{Weak input-to-state stability for semilinear systems} \label{sect:4.3}

While for linear systems 
weak and strong input-to-state stability are equivalent, this is open for semilinear systems. 
With the next example, we show at least, however, that for such semilinear systems weak input-to-state stability is strictly weaker than uniform input-to-state stability.

\begin{ex}
Suppose $X$, $U$ are Hilbert spaces and $\mathcal{U} := L^2(\R^+_0,U)$. Suppose further that $A$ is a contraction semigroup generator on $X$ and $B \in L(U,X)\setminus \{0\}$ is such that $A-BB^*$ generates a strongly but not exponentially stable semigroup, 
and let 
\begin{align*}
f(x) := -B g(B^*x) \qquad (x \in X),
\end{align*}
where $g: U \to U$ is Lipschitz continuous on bounded subsets and 
\begin{align} \label{eq:g-strikt daempfend und noch mehr}
g(v) = v \qquad (\norm{v} \le 1) \qquad \text{and} \qquad \scprd{v,g(v)} \ge c \qquad (\norm{v} > 1)
\end{align}
for some positive constant $c > 0$. (Simple choices for such operators 
are given by self-adjoint operators $A$ with $\sigma(A) \subset (-\infty,0]$ and with $0$ belonging to the essential but not to the point spectrum of $A$ and by compact operators $B \in L(U,X)$ such that $BB^*$ commutes with $A$. 
Another possible choice of operators $A$ and $B$ as above is given by Example~3.2 of~\cite{Sc18-Hagen}.)
It then follows from~\cite{Sc18-wp} (similarly to~\cite{CuZw16}) that for every $(x_0,u) \in X \times \mathcal{U}$ the initial value problem
\begin{align*}
x' = Ax + f(x) + Bu(t) \qquad \text{and} \qquad x(0) = x_0
\end{align*}
has a unique global mild solution $\phi(\cdot,x_0,u) \in C(\R^+_0,X)$ and that for $(x_0,u) \in D(A) \times C^1_c(\R^+_0,U)$ this mild solution is even a classical solution. It also follows from~\cite{Sc18-wp}  (similarly to~\cite{CuZw16}) that $\mathfrak{S} := (X,\mathcal{U},\phi)$ is a weakly input-to-state stable dynamical system with inputs. 
We now show that the system $\mathfrak{S}$ is not uniformly  input-to-state stable. Assume the contrary and set $r := \ul{\sigma}^{-1}(1/\norm{B})$. We then have for every $x_0 \in D(A)$ with $\norm{x_0} \le r$ that $\norm{B^* \phi(t,x_0,0)} \le 1$ for all $t \in \R^+_0$ by the uniform global stability of $\mathfrak{S}$ and therefore 
\begin{align}
\phi(t,x_0,0) = \e^{(A-BB^*)t}x_0 \qquad (t \in \R^+_0)
\end{align}
by~\eqref{eq:g-strikt daempfend und noch mehr} and the classical solution property of $\phi(\cdot,x_0,0)$. 
Since now $\mathfrak{S}$ is of uniform asymptotic gain 
by our assumption, it follows that
\begin{align}
r \norm{ \e^{(A-BB^*)t} } = \sup \big\{ \norm{ \phi(t,x_0,0) }: x_0 \in D(A) \text{ with } \norm{x_0} \le r \big\}
\longrightarrow 0 
\end{align}
as $t \to \infty$. Consequently, $\e^{(A-BB^*)\cdot}$ is exponentially stable. Contradiction (to our choice of $A$ and $B$)! $\blacktriangleleft$
\end{ex}

\section{Weak input-to-state stability and its relation to zero-input uniform global stability} \label{sect:5}


\subsection{A counterexample} \label{sect:5.1}

With the following example, we show that weak input-to-state stability is, in general, strictly stronger than the combination of zero-input uniform global stability and the weak asymptotic gain property. We use linear systems with a suitable input space $\mathcal{U} \subsetneq L^{\infty}(\R^+_0,\R)$ to show this.

\begin{ex}
Set $X := L^2(\R^+_0,\R)$ and 
\begin{align}
\mathcal{U} := \big\{ u \in L^{\infty}(\R^+_0,\R): u \text{ is eventually exponentially decaying to } 0 \big\} 
\end{align}
and endow $\mathcal{U}$ with the norm $\norm{\cdot}_{\mathcal{U}} := \norm{\cdot}_{\infty}$. (What we mean by a function $u$ that eventually exponentially decays to $0$ is that there exists a time $\tau_u \in \R^+_0$ and constants $C_u, \alpha_u > 0$ such that $|u(s)| \le C_u \e^{-\alpha_u s}$ for all $s \ge \tau_u$.) Also, let $A$ be the generator of the left-translation group on $X$, that is,
\begin{align*}
\e^{A t} f = f(\cdot+t) \qquad (f \in X \text{ and } t \in \R^+_0),
\end{align*}
and let $B \in L(\R,X)$ 
be given by $B v := v \, b$ for $v \in \R$, where $b \in X$ is chosen such that 
\begin{align} \label{eq:ex-2, properties of b}
b(\zeta) \ge 0 \qquad (\zeta \in \R^+_0) 
\qquad \text{and} \qquad
b \notin L^1(\R^+_0,\R)
\end{align}
(for example, $b(\zeta) = 1/\zeta \, \chi_{[1,\infty)}(\zeta)$).  
We now show that $\mathfrak{S} := (X,\mathcal{U},\phi)$ with 
\begin{align}
\phi(t,x_0,u) := \e^{A t}x_0 + \int_0^t \e^{A(t-s)}B u(s) \d s 
\qquad ((t,x_0,u) \in \R^+_0 \times X \times \mathcal{U})
\end{align}
is a system (with bounded reachability sets and with continuity at the equilibrium point $0$) which is zero-input uniformly globally stable and of weak asymptotic gain $0$ but not uniformly locally stable. In particular, $\mathfrak{S}$ is not weakly input-to-state stable and, moreover, we see that the second 
question-marked implication in Figure~2 of~\cite{MiWi16a} does not hold true in general. 
It is elementary to check that $\mathcal{U}$ is invariant under shifts to the left and under concatenations. Also, it is clear that $\phi(\cdot,x_0,u)$ is continuous for every $(x_0,u) \in X \times \mathcal{U}$ and that $\phi$ is cocyclic and causal. So, in other words, $\mathfrak{S}$ is a dynamical system with inputs. 
Since $\e^{A \cdot}$ is a contraction semigroup, it follows that $\mathfrak{S}$ is zero-input uniformly globally stable. It also follows that $\mathfrak{S}$ has bounded reachability sets and is continuous at the equilibrium point $0$ (Definition~3 and~4 of~\cite{MiWi16a}). 
Since $\e^{A \cdot}$ is strongly stable and every $u \in \mathcal{U}$ is eventually exponentially decaying to $0$, it further follows that 
\begin{align*}
\phi(t,x_0,u) \longrightarrow 0 \qquad (t\to \infty)
\end{align*}
for every $(x_0,u) \in X \times \mathcal{U}$ or, in other words, that $\mathfrak{S}$ is of weak asymptotic gain $0$. 
So, what remains to be shown is that $\mathfrak{S}$ is not uniformly locally stable. Assume, on the contrary, that $\mathfrak{S}$ is uniformly locally stable with corresponding functions $\ul{\sigma}$, $\ul{\gamma}$ and radius~$r$. 
Since $b \in X$ satisfies~\eqref{eq:ex-2, properties of b}, we see with the help of Fatou's lemma that
\begin{align*}
\liminf_{t \to \infty} \norm{ \int_0^t b(\cdot+s) \d s }^2 
&= \liminf_{t\to\infty} \int_0^{\infty} \bigg( \int_0^t b(\zeta+s) \d s \bigg)^2 \d \zeta \notag \\
&\ge \int_0^{\infty} \bigg( \int_0^{\infty} b(\zeta+s) \d s \bigg)^2 \d \zeta
= \infty.
\end{align*}
In particular, there exists a time $\ol{\tau} \in \R^+_0$ such that
\begin{align} \label{eq:ex-2, not uls 1}
r \cdot \norm{ \int_0^{\ol{\tau}} b(\cdot+s) \d s } > \ul{\gamma}(r).
\end{align}
Choose now a $u \in \mathcal{U}$ such that
\begin{align} \label{eq:ex-2, not uls 2}
u|_{[0,\ol{\tau}]} \equiv r 
\qquad \text{and} \qquad
\norm{u}_{\mathcal{U}} = r.
\end{align}
It then follows by~\eqref{eq:ex-2, not uls 1} and \eqref{eq:ex-2, not uls 2} that  $(0,u) \in \ol{B}_r^{X}(0) \times \ol{B}_r^{\mathcal{U}}(0)$ but
\begin{align*}
\norm{\phi(\ol{\tau},0,u)} = \norm{\int_0^{\ol{\tau}} u(s) \cdot \e^{A(\ol{\tau}-s)}b \d s } = r \cdot \norm{ \int_0^{\ol{\tau}} b(\cdot+s) \d s } 
> \ul{\gamma}(r) = \ul{\sigma}(0) + \ul{\gamma}(r). 
\end{align*}
Contradiction (to our uniform local stability assumption)! $\blacktriangleleft$
\end{ex}

\subsection{A positive result} \label{sect:5.2}

While 
weak input-to-state stability and the combination of zero-input uniform global stability and the weak asymptotic gain property are inequivalent for linear systems with general input spaces $\mathcal{U}$, they coincide for linear systems with input space $\mathcal{U} = L^p(\R^+_0,U)$.

\begin{prop} \label{prop:pos-result 3}
Suppose $\mathfrak{S} := (X,\mathcal{U},\phi)$ is as in Proposition~\ref{prop:pos-result 2}. Then $\mathfrak{S}$ is a dynamical system with inputs and $\mathfrak{S}$ is weakly input-to-state stable if and only if it is zero-input uniformly locally stable and of weak asymptotic gain.  
\end{prop}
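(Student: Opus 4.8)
The plan is to dispatch one direction immediately and concentrate on the other. If $\mathfrak{S}$ is weakly input-to-state stable, then by definition it is uniformly globally stable and of weak asymptotic gain; since uniform global stability gives the estimate \eqref{eq:UGS-def} for all $(x_0,u) \in X \times \mathcal{U}$, restricting to $\ol{B}_r^X(0) \times \{0\}$ yields zero-input uniform local stability at once, so both asserted properties hold. The substance of the proposition is the converse, so I would assume $\mathfrak{S}$ is zero-input uniformly locally stable and of weak asymptotic gain and show that it is uniformly globally stable — the weak asymptotic gain property being already in hand, this is exactly what is missing for weak input-to-state stability.

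First I would use linearity to bound the semigroup. Zero-input uniform local stability furnishes $\ul{\sigma} \in \mathcal{K}$ and $r > 0$ with $\norm{\e^{At} x_0} = \norm{\phi(t,x_0,0)} \le \ul{\sigma}(\norm{x_0}) \le \ul{\sigma}(r)$ whenever $\norm{x_0} \le r$ and $t \ge 0$; by homogeneity of $\e^{At}$ this forces $\norm{\e^{At}} \le \ul{\sigma}(r)/r =: M$ for every $t$, so the semigroup is uniformly bounded and $\norm{\e^{At} x_0} \le M \norm{x_0}$ for all $x_0 \in X$.

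Next I would control the input-to-state operators $\Phi_t$ of \eqref{eq:pos-result 2, B adm}. For a fixed $u \in \mathcal{U}$, applying the weak asymptotic gain property with $x_0 = 0$ produces, for each $\eps > 0$, a time $\ol{\tau}(\eps,0,u)$ beyond which $\norm{\Phi_t(u)} = \norm{\phi(t,0,u)} \le \eps + \ol{\gamma}(\norm{u}_{\mathcal{U}})$, while on the compact interval $[0,\ol{\tau}(\eps,0,u)]$ the continuous map $t \mapsto \Phi_t(u)$ is bounded. Hence $\sup_{t \ge 0} \norm{\Phi_t(u)} < \infty$ for every fixed $u$. Since $\mathcal{U} = L^p(\R^+_0,U)$ is a Banach space and each $\Phi_t : \mathcal{U} \to X$ is bounded and linear, the uniform boundedness principle then yields $K := \sup_{t \ge 0} \norm{\Phi_t}_{L(\mathcal{U},X)} < \infty$, whence $\norm{\Phi_t(u)} \le K \norm{u}_{\mathcal{U}}$ for all $t$ and $u$.

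Combining the two bounds through $\norm{\phi(t,x_0,u)} \le \norm{\e^{At} x_0} + \norm{\Phi_t(u)} \le M\norm{x_0} + K\norm{u}_{\mathcal{U}}$ establishes uniform global stability with $\ul{\sigma}(s) := Ms$ and $\ul{\gamma}(s) := Ks$, and together with the assumed weak asymptotic gain this is weak input-to-state stability. The hard part is the passage from pointwise to uniform boundedness of the $\Phi_t$: this is precisely where the weak asymptotic gain hypothesis — which a priori constrains only the tail behavior in $t$ — feeds through the uniform boundedness principle, and where the completeness of the full $L^p$-input space is indispensable; indeed the counterexample of Section~\ref{sect:5.1}, built over a genuinely smaller input space, shows the conclusion can fail otherwise.
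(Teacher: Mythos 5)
Your proposal is correct and follows essentially the same route as the paper's proof: both directions are handled identically, with the converse reduced to uniform global stability via pointwise boundedness of $t \mapsto \e^{At}x_0$ and $t \mapsto \Phi_t(u)$ (tail control from the weak asymptotic gain at $x_0=0$, plus continuity on the compact initial interval) followed by the uniform boundedness principle. The only cosmetic differences are that you bound $\norm{\e^{At}}$ directly by homogeneity rather than by a second appeal to the uniform boundedness principle, and that the paper first works on the ball $\ol{B}_r^{\mathcal{U}}(0)$ and extends by linearity.
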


\begin{proof}
We already know that $\mathfrak{S}$ is a dynamical system with inputs, and one of the claimed implications is trivial. 
So, let $\mathfrak{S}$ be zero-input uniformly locally stable (with corresponding function $\ul{\sigma}$ and radius $r$) and of weak asymptotic gain $\ol{\gamma}$ (with corresponding times $\ol{\tau}(\eps,x_0,u)$). We then have
\begin{gather*}
\norm{\e^{At}x_0} = \norm{\phi(t,x_0,0)} \le \ul{\sigma}(\norm{x_0}) 
\qquad (x_0 \in \ol{B}_r^{X}(0) \text{ and } t \ge 0) \\
\norm{\Phi_t(u)} = \norm{\phi(t,0,u)} \le 1 + \ol{\gamma}(\norm{u}_{\mathcal{U}}) 
\qquad (u \in \ol{B}_r^{\mathcal{U}}(0) \text{ and } t \ge \ol{\tau}(1,0,u)).
\end{gather*}
So, by linearity and the continuity of $[0, \ol{\tau}(1,0,u)] \ni t \mapsto \Phi_t(u)$ for $u \in \mathcal{U}$, we see that $\R^+_0 \ni t \mapsto \e^{At}x_0 \in X$ and $\R^+_0 \ni t \mapsto \Phi_t(u) \in X$ are bounded functions for all $x_0 \in X$ and $u \in \mathcal{U}$ and therefore 
\begin{align}
M_1 := \sup_{t \in \R^+_0} \norm{\e^{At}} < \infty
\qquad \text{and} \qquad 
M_2 := \sup_{t \in \R^+_0} \norm{\Phi_t} < \infty
\end{align} 
by the uniform boundedness principle. Consequently,
\begin{align*}
\norm{\phi(t,x_0,u)} \le M_1 \norm{x_0} + M_2 \norm{u}_{\mathcal{U}} \qquad (t \ge 0)
\end{align*}
for every $(x_0,u) \in X \times \mathcal{U}$. In particular, $\mathfrak{S}$ is uniformly globally stable and hence weakly input-to-state stable, as desired. 
\end{proof}

\section*{Acknowledgements}

I would like to thank the German Research Foundation (DFG) financial support through the grant ``Input-to-state stability and stabilization of distributed-parameter systems'' (DA 767/7-1).

\begin{small}

\end{small}

\end{document}